\newtheorem{theorem*}{Theorem}
\newtheorem{theorem}{Theorem}
\newtheorem{problem*}[theorem]{Problem}
\newtheorem{corollary}[theorem]{Corollary}
\newtheorem{proposition}[theorem]{Proposition}
\newtheorem{claim}{Claim}
\def\P{\mathbb{P}}
\def\N{\mathbb{N}}
\def\Z{\mathbb{Z}}
\def\S{\mathbb{S}}
\def\C{\mathbb{C}}
\def\CC{\mathcal{C}}
\newtheorem{assertion}{\bf Assertion} 
\def\P{\mathbb{P}}
\def\N{\mathbb{N}}
\def\Z{\mathbb{Z}}
\def\S{\mathbb{S}}
\def\C{\mathbb{C}}
\def\CC{\mathcal{C}}
\def\QQ{\overline{\mathbb{Q}}}
\begin{document}
\title{Jacobian conjecture as a problem on integral points on affine curves}
\author{Nguyen Van Chau}
\address{Institute of Mathematics, Vietnam Academy of Science and Technology, 18 Hoang Quoc Viet, 10307 Hanoi, Vietnam.}
\email{nvchau@math.ac.vn}
\thanks{The author was partially supported by Vietnam National Foundation for Science and Technology Development (NAFOSTED) grant 101.04-2017.12, and Vietnam Institute for Advanced Study in Mathematics (VIASM)}

\subjclass[2010]{14R15, 14R25, 11D72 }
\date{}
\keywords{Integral point, Polynomial automorphism, Jacobian conjecture}
\maketitle




\begin{abstract} 
It is shown that the Jacobian conjecture over  number fields may be considered as an existence problem of integral points on affine curves. More specially, if the Jacobian conjecture over $\C$ is false, then for some $n\gg 1$ there exists a counterexample $F\in \Z[X]^n$  of the  form 
$F_i(X)=X_i+ (a_{i1}X_1+\dots+a_{in}X_n)^{d_i}$, $a_{ij}\in \Z$, $d_i=2;3 $, $i,j=\overline{1,n},$
such that the affine curve $F_1(X)=F_2(X)=\dots=F_n(X)$ has no non-zero integer points.


\end{abstract}

\section{Introduction}
Let $k$ be a field of characteristic zero and $k[X]$ the ring of polynomials of the variable $X:=(X_1,X_2,\dots, X_n)$, $n>1$. In tradition,  polynomial maps $F=(F_1,F_2,\dots,F_n)\in k[X]^n$ with $JF:=\det DF\equiv 1$ will be called  {\it Keller maps}.  The  $n$-dimensional 
Jacobian conjecture over $k$ ($JC(k,n)$)
, which was posed firstly in 1939 by Ott-Heinrich Keller \cite{Keller}
and still open even for $n=2$, asserts that  {\it  every Keller map $F\in k[X]^n$ is invertible, and hence, has an inverse in $k[X]^n$}. We refer the readers to \cite{Bass,EssenBook} for nice surveys on this conjecture and related topics. 

It is known that the Jacobian conjecture is true if it is true for the field $\QQ$ of all algebraic numbers and that $JC(K,n)$ for a number field $K$ can be reduced to consider the existence of solutions in $O_K^n$  of the Diophantine equation $F(X)=0$ for Keller maps $F\in O_K^n[X]^n$, where $O_K$ is the ring of integers of $K$ (see in \cite{Bass,McKenna}). These facts show another aspect of the Jacobian conjecture from view points of the Diophantine geometry and would be useful in attempting to understand the nature of this conjecture.
This article is to present the following results that reduce the Jacobian conjecture  to an existence problem of  integral points on affine curves. 
 
\medskip
\noindent{\bf Theorem A}. {\it Let $K$ be a number field and $O_K$ the ring of integers in $K$. For every $n>1$ the conjecture $JC(K,n)$ is equivalent to the following statement:}
 \begin{enumerate}
\item[] $DJC(K,n)$: {\it  For every Keller map $F\in O_K[X]^n$ with $F(0)=0$  the affine curve 
$$F_1(X)=F_2(X)=\dots=F_n(X).\eqno(\CC_F)$$
always has non-zero points in $O_K^n$. } 
\end{enumerate}

\medskip

\medskip
\noindent{\bf Theorem B}. {\it If the Jacobian conjecture over $\C$ is not true,
 then  for some  $n\gg 1$ there exists a Keller map $F\in \Z[X]^n$ of 
the so-called quadratic-cubic linear form over $\Z$,
 $$F_i(X)=X_i+ \langle a_i,X\rangle^{d_i},\; a_i\in \Z^n,\;d_i=2;3 ,\;i=\overline{1,n}$$
, such that the affine curve $\CC_F$ has no non-zero points in $\Z^n$. }

\medskip

Theorem B leads to a little surprise consequence that the Jacobian conjecture over $\C$ for all $n>1$ can be reduced to the question whether for every Keller map $F\in \Z[X]^n$ of 
the quadratic-cubic linear form over $\Z$ 
the $6$-degree diophantine equation $F_1(X)^2+\dots+F_{n-1}(X)^2=0$ always has non-zero integer solutions (Cor. 1, Sec. 4).

Our approach here is based on  the celebrated Siegel's theorem on the integral points on affine curves and the  reduction theorems for the Jacobian conjecture, due to Bass, Connell and Wright  \cite{Bass}, Yagzhev  \cite{Yagzhev}, Dru\.zkowski  \cite{Druz1} and Connell and Van den Dries  \cite{Connell}. Theorem A and Theorem B will be proved in  sections 3 and 4. The essential key in the proofs is Main Lemma in Section 2, which shows that if $K$ is a number field and if  $F\in K[X]^n$ is a non-invertible Keller map, then for  general lines $l$ in $K^n$ the inverse images  $F^{-1}(l)$ are irreducible affine curves having at most finitely many points in $O_K^n$.

\section{Lemma on inverse images of generic lines} 
In this section we consider the possible behavior of non-invertible Keller maps, if exist. We will try to estimate the topological type and the number of integral points of the inverse images of generic lines  by such Keller maps. 

Let us denote $l(u,v):=\{ u+tv: t\in \C\}$ - the complex line of direction $0\neq v\in \C^n$ passing through a point $u\in \C^n$. By the {\it bifurcation value set}, denoted by $E_f$, of a dominant polynomial mapping $f: \C^n\longrightarrow \C^m$ we mean the smallest algebraic subset of $\C^m$ such that the restriction $f: \C^n\setminus f^{-1}(E_f)\longrightarrow \C^m\setminus E_f$ defines a locally trivial smooth fibration.

Suppose  $F\in \C[X]^n$ is a Keller map, but not invertible.  In this situation,  the field extension $\C(F)\subset \C(X)$ is algebraic and has  degree $d_F:=[\C(X):\C(F)] >1$. The bifurcation value set $E_F$ is just the set of all $a\in \C^n$ such that $\#F^{-1}(a)<d_F$, and the restriction $F:\C^n\setminus F^{-1}(E_F)\longrightarrow \C^n\setminus E_F$ gives a unbranched smooth covering of $d_F$ sheets. Since the extension $\C(F)\subset \C(X)$ is algebraic, there exists irreducible polynomials $h_i\in \C[X][T]$, $i=\overline{1,n}$, such that $h_i(F(X), X_i)= 0$. Following \cite{Jelonek01,EssenBook}, the bifurcation value set $E_F$ then is the hypersurface defined by the equation $a_1(X)\dots a_n(X)=0$, where $a_i(X)$ are  coefficients of  $T^{\deg_Th_i}$  in $h_i$. Let $H_F(X)$ be the product of all distinct irreducible polynomial factors of $a_i(X)$s. Then $E_F=\{a\in \C^n: H_F(a)=0\}$ and, in particular,  $H_F\in \QQ[X]$ if $F\in \QQ[X]^n$. Let us denote by  $K_F$  the cone of tangents at infinity of $E_F$, i.e. $K_F=\{v\in\C^n:  h_F(v)=0\}$, where $h_F$ is the leading homogeneous of $H_F$. The following lemma is an essential key in the proofs of Theorem A and Theorem B.

\medskip
\noindent{\bf Main Lemma } {\it
Suppose $F\in \C[X]^n$ is a Keller map, but not invertible. Then, there exists a non-constant polynomial $\sigma_F\in \C[U,V]$, $U=(U_1\dots,U_n)$ and $V=(V_1,\dots,V_n)$, such that 
\begin{enumerate}
\item[a)]  $\sigma_F(\bar u,V)\not\equiv 0$ and $\sigma_F(U,\bar v)\not\equiv 0$ for $\bar u\not \in E_F$ and $\bar v\not\in K_F$, and
\item[b)] the inverse images $F^{-1}(l(u,v))$ with $ u,v\in \C^n$ and $\sigma_F(u,v)\neq 0$ 
are  irreducible affine curves of
the same genus $g_F$ and  number $n_F>2$ of irreducible branches at infinity.
\end{enumerate}
\noindent In the other words, the inverse image by $F$ of a generic line is a connected Riemann surface with more than two punctures.
Moreover, in the case $F\in K[X]^n$ for a number field $K$, 
\begin{enumerate}
\item[c)] $H_F,\sigma_F\in \QQ[X]$ and 
\item[d)] for every $ u,v\in K^n$ with $ \sigma_F(u,v)\neq 0$ the inverse image $F^{-1}(l(u,v))$  has at most finitely many integral points in $K^n$.
\end{enumerate}
}

\medskip
\begin{proof} Let $F:\C^n\longrightarrow \C^n$ be as in the statement. We will try to construct the desired polynomial $\sigma_F$ and  prove the conclusions (a-c). Conclusion (d) then is an immediate consequence of (a), (b) and  Siegel's theorem, which asserts that any irreducible affine curve over a number field $K$ with positive genus or with more than two irreducible branches at infinity has at most finitely many $K$-integral points.

In view of the results on the topological equisingularity, due to Verdier \cite{Verdier} and Varchenko \cite{Varchenko}, there exists a proper algebraic subset $\Sigma\subset\C^n\times\C^n$ such that  the inverse images $F^{-1}(l)$, $l:=l(u,v)$, $(u,v)\in (\C^n\times\C^n)\setminus\Sigma$, are diffeomorphic to 
the same connected Riemann surface. The polynomial $\sigma_F$ then may be taken to be a  polynomial that defines an algebraic hypersurface containing $\Sigma$. Below, we will determine $\sigma_F$ by a constructive way that enables us to handle other properties in (a-c). To do that we need to use the following fact due to Jelonek \cite{Jelonek99,Jelonek01}: 

\medskip\noindent 
{\bf Theorem }(*)\label{JelonekThm} 
 {\it  Let $f:V\longrightarrow W$ be a dominant generically-finite polynomial mapping of irreducible affine varieties $V\subset \C^n$ and $W\subset \C^m$. Let $S_f$ be the so-called non-proper value set  of $f$, $S_f:=\{ a\in W: a=\lim_kf(x_k),  V\ni x_k\mapsto \infty\}$. Then, $S_f$ is empty or an algebraic hypersurface in $W$. Moreover, if the coefficients in the polynomials defining $V,W$ and $f$ are algebraic numbers, then all coefficients in the polynomial defining $S_f$ are also algebraic numbers.}
{\rm (see Theorem 2.3, \cite{Jelonek01} and it's proof.)}

\medskip
Now, consider the $d_F$-sheeted unbranched covering 
$$F:\C^n\setminus F^{-1}(E_F)\longrightarrow \C^n\setminus E_F.\eqno(1)$$
Since $F$ is locally diffeomorphic, the bifurcation value set $E_F$ coincides with the non-proper value set $S_F$ of $F$. By Theorem (*) we have
\begin{enumerate}
\item[i)] $E_F$ is an affine hypersurface in $\C^n$ defined by a reduced polynomial $H_F\in \C[X]$. If $F\in \QQ[X]^n$, then $H_F\in\QQ[X]$.
\end{enumerate}
Let $W\subset E_F$  be an irreducible component of $E_F$,  given by $h_W(X)=0$ for an irreducible factor $h_W\in \C[X]$ of $H_F$. Let $V$ be  an irreducible component of the inverse image $F^{-1}(W)$ and $f_{VW}:=F_{V}:V\longrightarrow W.$  Denote by $\Sigma(W)$ the set of all singular points of $W$ and by $E_{VW}$ the non-proper value set of $f_{VW}$.  Again by Theorem (*) 
\begin{enumerate}
\item[ii)] $E_{VW}$ is either empty set or  an algebraic variety of dimension $n-2$,  given by the equations  $h_W(X)=0$ and $g_{VW}(X)=0$ for a $g_{VW}\in \C[X]$;
\item[iii)] the restriction $f_{VW}: V\setminus f_{VW}^{-1}(\Sigma(W)\cup E_{VW})\longrightarrow W\setminus (\Sigma(W)\cup E_{VW})$ gives a unramified covering;
\item[iv)] If  $F\in\QQ[X]^n$, then $h_W,g_{VW}\in\QQ[X]$.
\end{enumerate}
Let  $B_F:=\bigcup E_{VW}$, where $W$ and $V$  
 run over the irreducible components of $E_F$ and the irreducible components of $F^{-1}(W)$, respectively.
Let $E^1_F$ denote the bifurcation value set of the restriction $F:F^{-1}(E_F)\longrightarrow E_F$. By definitions  $E^1_F$ is a proper algebraic subset of $E_F$, $E_F\setminus E^1_F$ is smooth and  the restriction  $$F:F^{-1}(E_F\setminus E^1_F)\longrightarrow E_F\setminus E^1_F\eqno (2)$$
gives a unramified  covering.  By (ii-iv) we can see  
\begin{enumerate}
\item[v)] $E^1_F$ is the union of $B_F$ and the set of all singular points of $E_F$.
\end{enumerate}
Let us denote by $\Sigma_F$ the complement of the set of all $(u,v)\in \C^n\times \C^n$ satisfying the following conditions: 
\begin{enumerate}
\item[a1)] $l(u,v)$  intersects transversally $E_F$,
\item[a2)] $v\not \in K_F$ and
\item[a3)] $l(u,v)\cap E^1_F=\emptyset$. 
\end{enumerate}
By (v) we can easy verify that  $(u,v)$ satisfies the conditions (a1-a3) if and only if it satisfies 
\begin{enumerate}
\item[b1)] $l(u,v)$ intersects $E_F$ at $\deg H$ different points, and
\item[b2)] $l(u,v)\cap B_F=\emptyset$.
\end{enumerate}
Now, we define 
$$\begin{aligned}&D(U,V)&:=&Disc_t(H(U+tV))\cr 
&R(U,V)&:=&\prod_W\prod_{VW} Res_t(h_W(U+tV),g_{VW}(U+tV))\cr
&\sigma_F(U,V)&:=&D(U,V)R(U,V).
\end{aligned}
$$
\begin{assertion} We have
\begin{enumerate}
\item[c1)] $\Sigma_F=\{(u,v)\in \C^n\times\C^n:\sigma_F(u,v)= 0\};$
\item[c2)] $\sigma_F(\bar u,V)\not\equiv 0$ and $\sigma_F(U,\bar v)\not\equiv 0$ for $\bar u\in \C^n\setminus E_F$ and $0\neq \bar v\in \C^n\setminus K_F$;
\item[c3)]  $\sigma_F\in \QQ[U,V]$ if $F\in \QQ[X]^n$.
\end{enumerate} 

\end{assertion}
\begin{proof} Observe that the conditions (b1) and (b2) can be expressed as $D(u,v)\neq 0$ and $R(u,v)\neq 0$, respectively, that implies (c1). (c3) follows from the constructing of $\sigma_F$ and (iv). 

We now prove (c2).

Given $\bar u\in \C^n\setminus E_F$. For each irreducible component $W$ of $E_F$ let $W_{\bar u}:=\{ x\in W: \varphi(x)=0\}$ and $V_{\bar u}:=\bigcup_WW_{\bar u}$, where $\varphi(X):= Dh_W(X)(X-\bar u)$. We define the cone $$K_{\bar u}:=K_F\cup \{tv: v\in V_{\bar u}\cup E^1_F, t\in \C\}.$$ Then, by the conditions (a1-a3) $\sigma_F(\bar u,v)\neq 0$  if and only if $v\in \C^n\setminus K_{\bar u}$. Obviously, $\deg \varphi(X)=\deg h_W(X)$ and $\varphi(\bar u)=0$. Since $h_W$ is irreducible and $\bar u\not\in E_F$, it follows that  $W_{\bar u}$ is a subset of $W$ and of pure dimension less than $n-1$. Therefore, $V_{\bar u}$ is an algebraic subset of $W$ and has a pure dimension less than $n-1$. Note that  $\dim E^1_F <n-1$ and $\dim K_F<n$. This follows that  $K_{\bar u}$  is a closed set of pure dimension less than $n$. Thus, $\C^n\setminus K_{\bar u}$ is open dense in $\C^n$, and hence, $\sigma_F(\bar u, V)\not\equiv 0$.

Given $\bar v\not \in K_F$. For each irreducible component $W$ of $E_F$ let $W_{\bar v}:=\{ x\in W: D h_W(x)\bar v=0\}$ and $U_{\bar v}:=\bigcup_WW_{\bar v}$. We define $S_{\bar v}:=\{u\in \C^n:u=x+t\bar v, t\in \C, x\in U_{\bar v}\cup E^1_F\}$. Since $\bar v\not \in K_F$, by the conditions (a1-a3) we can verify that $\sigma_F(u,\bar v)\neq 0$ if and only if $u\not\in S_{\bar v}$. Furthermore, $D h_W(X)\bar v\not\equiv 0$. Otherwise, $\bar v$ belongs to the tangent space $T_xW$ for all smooth point $x$ in $W$ that is impossible. Obviously,  $\deg Dh_W(X)\bar v \leq \deg h_W(X)-1$.  So, $W_{\bar v}$ is a proper algebraic subset of $W$ and has a pure dimension less than $n-1$. Hence,  $U_{\bar v}$ is of pure dimension less than $n-1$. Again we can  see that $S_{\bar v}$ is of dimension less than $n$ that ensures $\sigma_F(U,\bar v)\not \equiv 0.$

\end{proof}

\begin{assertion} The family $\mathcal{F}:=\{F^{-1}(l(u,v)): \sigma_F(u,v)\neq 0\}$ consists of  irreducible affine  curves of 
the same topological type.
\end{assertion}
\begin{proof} It suffices to show 
\begin{enumerate}
\item[d1)] $\mathcal{F}$ contains an irreducible affine curve, and
\item[d2)] The affine curves in $\mathcal{F}$ are homeomorphic.
\end{enumerate}

We first prove (d1). Fix  $0\neq  v\in \C^n\setminus K_F$. 
Let $E$ denote the vector space orthogonal to $v$, $E=\{ u\in \C^n: \langle u,v\rangle=0\}$. We will show that there is an open dense algebraic subset $E'$ of $E$ such that for every $z\in E'$ the curve $F^{-1}(l(u,v))$ is irreducible and $\sigma_F(u,v)\neq 0$. Let $\pi: \C^n \longrightarrow E\cong \C^{n-1}$ the projection $u\mapsto u-\frac{\langle u,v\rangle}{\langle v,v\rangle}v\in E$ and $\varphi:=\pi\circ F:\C^n\longrightarrow E$. We have that  $F^{-1}(l(u,v))=\varphi^{-1}(u)$ for $u\in E$. Observe that $\varphi$ is a dominant morphism. Moreover, the restriction $\pi: E_F\longrightarrow E$ is proper, since $v\not\in K_F$. Then, by Theorem 2 in \cite{Krasinski} there exists an open dense algebraic subset $A$ of $E$ such that for every $u\in A$ the fiber $\varphi^{-1}(u)$ is irreducible. On the other hand, since  $ v\not \in K_F$, by Assertion 1 (c2) the set $B:= \{u\in E: \sigma_F(u,\bar v)\neq 0\}$ is open dense in $E$. So, $E':=A\cap B$  provides the desired subset. 

To prove (d2) it suffices to show that for two arbitrary $z^i\in \C^n\times \C^n$ with $\sigma_F(z^i)\neq 0$, $i=0;1$, the curves $F^{-1}(l(z^i))$  are homeomorphic. Given such points $z^i$. Since the polynomial $\sigma_F$ is not constant,  we can take  a smooth path $\gamma:[0,1]\longrightarrow \C^n\times \C^n$ such that
$\gamma(0)=z^0$,  $\gamma(1)=z^1$ and $\sigma_F(\gamma(t))\neq 0$ for $t\in [0,1]$.
Let $l_t:=l(\gamma(t))$, $t\in [0,1]$. Note that $E_F\setminus E^1_F$ is a smooth open algebraic subset of the hypersurface $E_F$ and by(1-2) the restriction
 $$F:(\C^n\setminus F^{-1}(E_F), F^{-1}(E_F\setminus E^1_F))\longrightarrow (\C^n\setminus E_F,E_F\setminus E^1_F)\eqno(3)$$
 defines a unramified covering. Furthermore, by the construction of $\sigma_F$ the lines $l_t$  intersect transversally $E_F$,  do not pass through $E^1_F $ as well as   tangent to  $E_F$  at infinity. Then 
 by a standard way we can construct a continue family of homeomorphisms 
$$\Gamma_t: ( l_0, l_0\setminus E_F,l_0\cap E_F)\longrightarrow (l_t, l_t\setminus E_F, l_t\cap E_F),\; t\in [0,1].$$
The lift of $\Gamma_t$ by the unramified covering (3) then introduces homeomorphisms
$$\Phi_t:(F^{-1}(l_0),F^{-1}(l_0\setminus E_F),F^{-1}(l_0\cap E_F))\longrightarrow (F^{-1}(l_t),F^{-1}(l_t\setminus E_F),F^{-1}(l_t\cap E_F)).$$
In particular, $\Phi_1$ gives a homeomorphism of $F^{-1}(l_0)$ and $F^{-1}(l_1)$.
\end{proof}

Let $(g_F,n_F)$ denote the topological type of curves in $\mathcal{F}$, where $g_F$  and $n_F$ are genus and the number of  irreducible branches at infinity, respectively. 
\begin{assertion}\label{LemnF} $n_F>2$.
\end{assertion}
\begin{proof} 
We first show that there exists  $(u,v)\in \C^n\times \C^n$ such that $\sigma_F(u,v)\neq 0$ and the line $l:=l(u,v)$ is contained in the image $F(\C^n)$. To see it, fix $u\in \C^n\setminus E_F$ and let $V:=\{v\in \C^n: v\neq 0, \sigma(u,v)\neq 0\}$. By Lemma 4  $V$ is open dense in $\C^n$ and $\sigma_F(u,v)\neq0$ for all $v\in V$.
 Let $S:=\C^n\setminus F(\C^n)$ and $K_u:=\{v\in \C^n: v=t (s-\bar u), s\in S, t\in \C\}$.  Then  $l(u,v)$ 
intersects 
$S$ if and only if $v\in K_u$. So it suffices to show $V\setminus K_{u}\neq \emptyset$. Observe,  $S$ is a closed proper algebraic subset of $\C^n$, since $F$ is locally diffeomorphic on $\C^n$. Furthermore,  $\dim S<n-1$. Otherwise, $S$ would contains a hypersurface $h(X)=0$ for a non-constant polynomial $h\in \C[X]$. This would imply that $h\circ F(X)\equiv c\neq 0$, and consequently, $Dh(F(X)).DF(X)\equiv 0$ that is impossible. So, the cone $K_v$ is of pure dimension $<n$. Hence  $V\setminus K_u$ is open dense in $\C^n$. 

Now, we will prove $n_F>2$. By the above observation we can take a line $l=l(u,v)$ such that $\sigma_F(u,v)\neq 0$  and $l\subset F(\C^n).$ Let $C:= F^{-1}(l)$ and  $\hat C$ be a smooth compactification of $C$. By Assertion 2 $\hat C$ is a connected Riemann surface of genus $g_F$ and $\hat C\setminus C$ consists of $n_F$ distinct points.  
Regarding $l$ as the line $\C$ in $\P^1=\C\cup\{\infty\}$, we can extend the restriction of $F$ on $C$ to a regular morphism $f:\hat C\longrightarrow \P^1$ that gives a ramified covering of $d_F$-sheets over $\P^1$. The morphism   $f$ may be ramified only at points in $\hat C\setminus C$, since $JF\equiv 1$. Moreover, by the choice of $l$ we have $\P^1\setminus f(C)=\{\infty\}$.  Now, applying  Hurwitz Relation to $f$ we have
$$2-2g_F=2d_F- \sum_{a\in \hat C\setminus C}(\deg_{a}f-1)\eqno(4)$$
, where $\deg_a f$ is the local degree of $f$ at $a$. Note that $\deg_af\leq d_F$ and $\deg_af= d_F$ only if $f^{-1}(\{f(a)\})=\{a\}$. From (4) it follows  that  $g_F=0$ and $d_F=1$ for when $n_F=1$ and that $g_F=0$ and $\P^1\setminus f(C)$ consists of two distinct points for when $n_F=2$. So, both of the cases $n_F=1;2$ are impossible.
\end{proof}

\medskip
All (a-c)   now follow from the assertions 1-3.
\end{proof}
\section{Proof of Theorem A}

In sequels, for any number field $K$ we denote by $\S^n_K$ the set of all primitive vectors in $O_K^n$, $\S^n_K:=\{v=(v_1,\dots,v_n)\in O_K^n: \gcd\{v_i\}=1\}$. We will prove the following variant of Theorem A.

\medskip
\noindent{\bf Theorem A'} {\it  Let $K$ be a number field $K$. The conjecture $JC(K,n)$, $n>1$, is equivalent to the statement:
\begin{enumerate}
\item[]$DJC(K,n,m)$: Let $m\in \N$,  $n> m\geq 0$. For every Keller map $F\in O_K[X]$ with $F(0)=0$ the affine curve 
$$\begin{cases}F_1(X)=\dots=F_m(X)=0 \\
F_{m+1}(X)=\dots=F_n(X)\end{cases}\eqno \mathcal{C}_F(m)$$ has  non-zero points in $O_K^n$.
\end{enumerate}}

\begin{proof} We need to use the following elementary fact.
\begin{claim} {\it Let $K$ be a number field $K$. For  $v,w\in \S^n_K$ there is a matrix $A\in SL(n,O_K)$ such that $Av=w$.}
\end{claim}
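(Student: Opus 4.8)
The plan is to reduce the claim to the assertion that every primitive vector $v\in\S^n_K$ is the first column of some matrix in $SL(n,O_K)$, and then to deduce this from the structure theory of finitely generated projective modules over the Dedekind domain $O_K$. For the reduction, note that if each $v\in\S^n_K$ admits a matrix $B_v\in SL(n,O_K)$ with first column $v$, then given $v,w\in\S^n_K$ the matrix $A:=B_w B_v^{-1}\in SL(n,O_K)$ sends $v=B_v e_1$ to $w=B_w e_1$, where $e_1:=(1,0,\dots,0)$; so it suffices to complete an arbitrary primitive $v$ to a matrix in $SL(n,O_K)$ having $v$ as its first column.

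To do this, use that $v$ primitive means $v_1,\dots,v_n$ generate the unit ideal of $O_K$, so there is a row vector $\lambda\in O_K^n$ with $\lambda v=1$. Then $O_K\to O_K^n$, $1\mapsto v$, is a split monomorphism and $O_K^n\cong O_K v\oplus P$ with $P:=\ker\lambda$ finitely generated projective of rank $n-1$. By the Steinitz structure theorem, $P\cong O_K^{\,n-2}\oplus\mathfrak a$ for some fractional ideal $\mathfrak a$, and its class in the ideal class group is recovered from $\Lambda^{n-1}P\cong\mathfrak a$. Taking top exterior powers in $O_K^n\cong O_K\oplus P$ gives $O_K\cong\Lambda^n(O_K^n)\cong\Lambda^1(O_K)\otimes\Lambda^{n-1}(P)\cong\mathfrak a$, so $\mathfrak a$ is principal and $P\cong O_K^{\,n-1}$ is free. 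Choosing an $O_K$-basis $w_2,\dots,w_n$ of $P$, the vectors $v,w_2,\dots,w_n$ form a basis of $O_K^n$, so the matrix $B$ with these columns lies in $GL(n,O_K)$, say with $\det B=u\in O_K^\times$; replacing $w_2$ by $u^{-1}w_2$ (legitimate since $n\ge 2$) produces a matrix in $SL(n,O_K)$ with first column $v$.

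The only step that is not pure bookkeeping is the freeness of $P$, i.e.\ the triviality of its Steinitz class, which is exactly where the Dedekind property of $O_K$ enters; everything else is formal. A longer but more elementary alternative would be to induct on $n$, using the $1\tfrac12$-generator property of ideals in $O_K$ to kill the entries of $v$ by elementary column operations until it becomes $e_1$, which reproves the same statement without invoking module theory.
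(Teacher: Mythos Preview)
Your proof is correct and takes a genuinely different route from the paper's. Both arguments begin with the same reduction---show that any primitive $v\in\S^n_K$ can be realized as the first column of some matrix in $SL(n,O_K)$---but then diverge. The paper proceeds by explicit induction on $n$: it factors out $r:=\gcd(v_2,\dots,v_n)$, applies the inductive hypothesis to the primitive $(n-1)$-vector $\bar v:=r^{-1}(v_2,\dots,v_n)$, and builds an $n\times n$ matrix with two free parameters $\alpha,\beta\in O_K$ whose determinant is $\alpha v_1+(-1)^n\beta r$; since $\gcd(v_1,r)=1$, the parameters can be chosen to make this equal to $1$. You instead invoke the Steinitz structure theorem: the complement $P$ of $O_Kv$ in $O_K^n$ is finitely generated projective of rank $n-1$, and a top-exterior-power computation forces its Steinitz class to be trivial, so $P$ is free and $v$ extends to an $O_K$-basis.

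Your approach is conceptually cleaner and pinpoints exactly where the Dedekind hypothesis enters. It also sidesteps a delicacy in the paper's induction: writing $r=\gcd(v_2,\dots,v_n)$ as a single element tacitly assumes the ideal $(v_2,\dots,v_n)\subset O_K$ is principal, which need not hold in a non-PID ring of integers---making that step fully rigorous requires precisely the kind of $1\tfrac12$-generator manoeuvre you allude to at the end. On the other hand, the paper's argument is self-contained and constructive (one actually writes down the matrix), whereas yours imports the structure theory of projective modules over Dedekind domains as a black box.
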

 \begin{proof} It suffices to show  that for  each $v\in \S_K^n$ there is  $A\in SL(n,O_K)$ such that $ A e_1=v$, where $e_1:=(1,0,0,...,0)^T$. We do prove that by induction on $n$. The cases $n=1,2$ are obvious.  Let 
$n\geq 3$ and let $v=(v_1,v_2,\dots,v_n)^T\in \mathrm{S}^n_K$ be given. Without  a loss of generality we may assume 
that  $v_1\neq 0$. Let $\bar v:=r^{-1}( v_2,..., v_n)^T$ with $r:=\gcd(v_2,...,v_n)$. Note that $\gcd(r,v_1)=1$.  
By induction assumption there is a $(n-1)\times(n-1)$ matrix $\bar A:=[\bar v\;\; B]\in SL(n-1,O_K)$, for which $\bar A\bar e_1=\bar v$
, where $\bar e_1=(1,0,\dots,0)^T\in \S_K^{n-1}$. Let 
$$A(\alpha,\beta):= \begin{bmatrix}v_1&O&\beta\\r\bar v&B&\alpha\bar v
 \end{bmatrix},$$
which is a $n\times n$-matrix with entries in $O_K$ and parameters  $ \alpha,\beta\in O_K$.
Obviously, $A(\alpha,\beta)e_1=v$ and $\det A(\alpha,\beta)=\alpha v_1+(-1)^n \beta r$. Since $\gcd(r,v_1)=1$, we can choose  $0\neq \alpha_0,\beta_0\in O_K$ such that $\det A(\alpha_0,\beta_0)=1$, i.e. $A(\alpha_0,\beta_0)\in SL(n,O_K)$. This concludes the proof.
 \end{proof}

To prove the theorem it suffices to verify that if $JC(K,n)$ is false, then $DJC(K,n,m)$ has a counterexample. Assume that $G\in K[X]^n$ is a non-invertible Keller map $G\in K[X]^n$. We will show that there is a non-invertible Keller map $H\in O_K[X]^n$ with $H(0)=0$ and a line $l=l(0,v)$, $v\in \S^n_K$, such that $H^{-1}(l)\cap O_K^n=\{0\}$. Then, by the above claim we may choose a matrix $A\in SL(n,O_K)$ such that $Av=w$, where $w_i=0$ for $i=\overline{1,m}$ and $w_i=1$ for $i=\overline{m+1,n}$. The map $F(X):=A\circ H\circ A^{-1}(X)$ then provides a counterexample to $DJC(K,n,m)$.

Now, we determine such desired $H$ and $v$. By changing coordinates, if necessary, we may assume that $G\in O_K[X]^n$, $G(0)=0$ and $0$ is not a bifurcation value of $G$. Note that the algebraic closure of $\S_K^n$ is the whole space $\C^n$. Since $0\not \in E_G$,  by Main Lemma  we may take a line $l:=l(0,v)$ with $v\in \S^n_K$ and $\sigma_F(0,v)\neq 0$ such that $S:=G^{-1}(l)\cap O_K^n$ is a finite set. If $S=\{0\}$, we put $H(X):=G(X)$. If $S\neq \{0\}$, we may choose a number $0\neq r\in O_K$ such that $S\cap rO_K^n=\{0\}$, and then put 
$H(X):=\frac{1}{r}G(rX)=G^{(1)}(X)+rG^{(2)}(X)+\dots+r^{k-1}G^{(k)}(X),$
where $G^{(i)}$ are $i$-degree homogeneous components of $G$. Observe, $H\in O_K[X]^n$  and is a Keller map. Moreover,  if $x\in H^{-1}(l)\cap O_K^n$, then $G(rx)=r H(x)\in l$, i.e. $rx\in S\cap O_K^n$. Therefore,  $x=0$ by the choice of $r$. Hence, $H^{-1}(l)\cap O_K^n=\{0\}$.
\end{proof}

\medskip

\section{Proof of Theorem B}

First, we recall the following elementary properties of the bifurcation value sets, which will be used later.  
\begin{proposition}\label{Pro2} Let $F\in \C[X]^n$  with $F(0)=0$. 
\begin{enumerate}
\item[i)]  $E_{F\circ P}=E_F$ and $E_{P\circ F}=P(E_F)$ for  polynomial automorphisms $P$ of $\C^n$.
\item[ii)] $E_{(^rF)}=\frac{1}{r} E_F$ for  $(^rF)(X):=\frac{1}{r}F(rX)$, $0\neq r\in \C$;
\item[iii)] $E_{\hat F} =E_F\times \C^m$ for $\hat F(X,Y):=(F(X),Y)\in \C[X,Y]^{n+m}$.
\end{enumerate}
\end{proposition}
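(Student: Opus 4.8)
The plan is to verify the three identities separately, in each case unwinding the definition of the bifurcation value set $E_F$ as the non-proper value set $S_F$ (which is legitimate here by the discussion preceding the Main Lemma, or more elementarily because $E_F$ is the complement of the maximal open set over which $F$ restricts to a locally trivial fibration). For (i), let $P$ be a polynomial automorphism of $\C^n$. Since $P$ and $P^{-1}$ are everywhere-defined proper homeomorphisms, a sequence $x_k\to\infty$ in the source has $F(x_k)\to a$ iff $(P\circ F)(x_k)=P(F(x_k))\to P(a)$; more structurally, composing the fibration $F$ over $\C^n\setminus E_F$ with the trivializing automorphism $P$ of the target shows $P\circ F$ is a locally trivial fibration exactly over $P(\C^n\setminus E_F)=\C^n\setminus P(E_F)$, and no larger set, because $P$ is a homeomorphism of $\C^n$. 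Hence $E_{P\circ F}=P(E_F)$.

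For (ii), write $m_r(X):=rX$, an automorphism of $\C^n$, and $d_r(Y):=\frac1r Y$, an automorphism of the target; then $({}^rF)=d_r\circ F\circ m_r$. Applying part (i) (twice, once for pre- and once for post-composition — pre-composition with an automorphism of the source does not change the bifurcation set at all, since it just reparametrizes the total space of the fibration, and post-composition with $d_r$ moves $E_F$ to $d_r(E_F)=\frac1r E_F$) gives $E_{({}^rF)}=\frac1r E_F$. For (iii), with $\hat F(X,Y)=(F(X),Y)$ on $\C^{n+m}=\C^n_X\times\C^m_Y$, observe that $\hat F$ is the product map $F\times\mathrm{id}_{\C^m}$; a fibration of $F$ over $U\subset\C^n$ yields, by taking the product with $\mathrm{id}_{\C^m}$, a fibration of $\hat F$ over $U\times\C^m$, and conversely a trivialization of $\hat F$ near $(a,b)$ restricts (fixing the $Y$-coordinate, which is carried isomorphically) to a trivialization of $F$ near $a$. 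Equivalently, on the non-proper value set: a sequence $(x_k,y_k)\to\infty$ in $\C^{n+m}$ with $\hat F(x_k,y_k)=(F(x_k),y_k)\to(a,b)$ forces $y_k\to b$ bounded, hence $x_k\to\infty$ and $F(x_k)\to a$, so $(a,b)\in S_F\times\C^m$; the reverse inclusion is immediate by padding an escaping sequence for $F$ with a constant $Y$. Therefore $E_{\hat F}=E_F\times\C^m$.

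Each of these is essentially a formal manipulation, so I do not anticipate a serious obstacle; the only point requiring a little care is making sure the characterization of $E_F$ one uses (topological fibration locus versus non-proper value set) behaves well under the operations in question, which is why I would phrase the arguments in terms of the fibration $F\colon\C^n\setminus F^{-1}(E_F)\to\C^n\setminus E_F$ whenever post-composition or products are involved, and in terms of limits of $F(x_k)$, $x_k\to\infty$, for the product case (iii), where the properness bookkeeping is cleanest. No appeal to Siegel's theorem, Jelonek's theorem, or the Main Lemma is needed for this proposition — it is purely a collection of naturality properties of $E_{(\cdot)}$ that will be invoked repeatedly in the proof of Theorem B.
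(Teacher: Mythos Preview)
Your argument is correct; the paper itself offers no proof, stating only that ``the proof is elementary and left to the readers,'' and your verification via the fibration characterization of $E_F$ (supplemented by the non-proper value set description for (iii)) is exactly the kind of routine check intended. One minor caveat: the identification $E_F=S_F$ invoked at the outset is justified in the paper only for Keller maps, whereas the proposition is stated for arbitrary $F\in\C[X]^n$; since you also supply the fibration-based argument in each case, this does not affect the validity of your proof.
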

The proof is elementary and left to the readers.



\medskip

\noindent{\bf Theorem C} {\it If the Jacobian conjecture over $\C$ is false, then for some $n\gg 1$ it has a counterexample $F\in \Z[X]^n$ of the  quadratic-cubic linear form over $\Z$
such that   $0\not \in E_F$.}

\medskip

\begin{proof}  
Assume the Jacobian conjecture over $\C$ is false. It follows from 
 Theorem 1.5 in \cite{Connell} that
for some $m\gg 1 $ there exists a non-invertible Keller map of $\C^m$ with coefficients in $\Z$. Then, by the reduction procedures due to Bass, Connell and Wright \cite{Bass}, Yagzhev  \cite{Yagzhev} and Dru\.zkowski  \cite{Druz1} for some $n>m$ we can construct a non-invertible Keller map $H\in \Z[X]^n$ of the cubic linear form $H_i(X)=X_i+ \langle c_i,X\rangle^3,\; c_i\in \Z^n,\; i=\overline{1,n}.$ 

Note that by the Jacobian condition the algebraic closure of $H(\Z^n)$ is the whole $\C^n$.  So, we can always take a point $\alpha\in \Z^n$ such that $H(\alpha)\not \in E_F$. 
Introduce a new variable $Y:=(Y_1,\dots, Y_n)$ and put $G(X,Y):=(H(X+\alpha)-H(\alpha), Y).$ Then, $G\in \Z[X,Y]^{2n}$ and  
$$\begin{aligned}
&G_i(X,Y)&=&X_i+3\langle c_i,\alpha\rangle^2\langle c_i,X\rangle+3\langle c_i,\alpha\rangle\langle c_i,X\rangle^2+ \langle c_i,X\rangle^3\cr
&G_{n+i}(X,Y)&=&Y_i\cr
\end{aligned}
$$
for all $i=\overline{1,n}$ . Furthermore, $G$ is a non-invertible Keller map, $E_G=(E_H-H(\alpha))\times\C^n$  by Proposition 1 and $0\not\in E_G$ by the choice of $\alpha$.

Now we define polynomial automorphisms of $\Z^{2n}$
$$
\begin{aligned}
&P(X,Y):=(X_1-3\langle c_1,\alpha\rangle Y_1,\dots,X_n-3\langle c_n,\alpha\rangle Y_n,Y_1,\dots,Y_n)\cr
&Q(X,Y):=(X_1,\dots,X_n, Y_1+\langle c_1,X\rangle^2,\dots, Y_n+\langle c_n,X\rangle^2)\cr
\end{aligned}
$$
and put   
$\hat F(X,Y):=P\circ G\circ Q(X,Y).$
Then, $\hat F\in \Z[Y,Y]^{2n}$ is a non-invertible Keller map and $0\not\in E_{\hat F}$ (by Proposition 1). Moreover,  
$$\begin{aligned}&\hat F_i(X,Y)&=&X_i-3\langle c_i,\alpha\rangle Y_i+ 3\langle c_i,\alpha\rangle^2\langle c_i,X\rangle +\langle c_i,X\rangle^3 \cr
&\hat F_{n+i}(X,Y)&=&Y_i+\langle c_i,X\rangle^2
\end{aligned}
$$
for all $i=\overline{1,n}$ .

Finally,  put $F(Z):=\hat F\circ R(Z)$, where $Z:=(X,Y)$ and $R$ denotes the inverse of the derivative matrix $D\hat F(0)$. Then, $F\in \Z[Z]^{2n}$ is a non-invertible Keller map with $0\not\in E_F$ and  
$$\begin{aligned}
&F_i(Z)&=& Z_i+\langle a_i, Z\rangle^3\cr
&F_{n+i}(Z)&=&Z_{n+i}+\langle a_i, Z\rangle^2
\end{aligned}
$$
for all $i=\overline{1,n}$, where $a_i:= R^T(c_i,0)\in \Z^{2n}$ and $R^T$ is the transpose of the matrix $R$.


\end{proof}

\medskip
\noindent{\bf Proof of Theorem B} Assume that the Jacobian conjecture is false.
Then, by Theorem C there exists  a non-invertible Keller map $F\in \Z[X]^n$ of 
the quadratic-cubic linear form over $\Z$ $$F_i(X)=X_i+\langle c_i,X\rangle^{d_i},\;  c_i\in \Z^n,\;d_i=2;3 ,\; i=\overline{1,n}$$ such that $0\not\in E_F$. 
Let $\sigma_F(U,V)\in \QQ[U,V]$ be as in Main Lemma. Since $0\not\in E_F$, by Main Lemma (a) we have $\sigma_F(0,V)\not\equiv 0$.  Note that the algebraic closure of $\{ (w_1^6,\dots, w_n^6): (w_1,\dots,w_n)\in \Z^n\}$ is the whole $\C^n$. So  we can choose  a vector  $w\in \Z^n$ with all $w_i\neq 0$   such that $\sigma_F(0,v)\neq 0$ for  $v:=(w_1^6,\dots,w_n^6)$. Then by Main Lemma (b) the curve $F^{-1}(l(0,v))$ has at most finitely many  points in $\Z^n$. Therefore, there is $0\neq r\in \Z$ such that  the curve $F^{-1}(l(0,v))$ has no non-zero  points in $r\Z^n$. Let $\hat F(X):=\frac{1}{r^3}F(r^3X)$. Then, 
$\hat F\in \Z[X]^n$ is a non-invertible Keller map, the curve $\hat F^{-1}(l(0,v))$ has no non-zero integer points and
$\hat F_i(X)=X_i+\langle b_i,X\rangle^{d_i},\; i=\overline{1,n}$, where $b_i:=r^{\frac{6}{d_i}}c_i\in \Z^n$.

Now, let $T:\C^n\longrightarrow \C^n$ be the linear transform of $\C^n$ defined by $T_i(X)=v_iX_i$ and
$$\delta:=\prod_{i=1}^nv_i=\prod_{i=1}^n w_i^6.$$ 
We define 
$$G(X):=\frac{1}{\delta}T^{-1}\circ \hat F\circ T(\delta X).$$ Then, $G$ is also a non-invertible Keller map and
$$G_i(X)= X_i+ \frac{1}{\delta}\frac{1}{v_i}\langle b_i, T(\delta X)\rangle^{d_i} =X_i+ \frac{\delta^{d_i-1}}{v_i}\langle T(b_i), X)\rangle^{d_i}=X_i+\langle a_i,X\rangle^{d_i}$$
for all $i=\overline{1,n}$, where $a_i:=(w_i^{-\frac{6}{d_i}}\prod_{j=1}^nw_j^{(d_i-1)\frac{6}{d_i}})T(b_i)\in \Z^n$ as $d_i=2;3 $.
Furthermore, the curve $\mathcal{C}_G$ has no non-zero integer point. Indeed, if $x\in \mathcal{C}_G\cap \Z^n$, $G(x)=t(1,\dots,1)$,  then $\frac{1}{\delta}\hat F\circ T(\delta x)=T\circ G(x)=tv$, and hence, $T(\delta x)\in \hat F^{-1}(l(0,v))\cap  \Z^n$. So, $x=0$  by the choice of $\hat F$.
Thus, $G$ provides a desired counterexample. 
\qed

\begin{corollary}\label{JC} To prove the Jacobian conjecture over $\C$  for all $n>1$ it suffices to verify that there does not exist Keller maps $F\in \Z[X]^n$ of the quadratic-cubic linear form over $\Z$ 
such that the equation $F_1(X)^2+\dots+F_{n-1}(X)^2=0$  has no non-zero solutions in $\Z^n$.
\end{corollary}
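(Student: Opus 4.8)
The plan is to argue by contraposition. Suppose the Jacobian conjecture over $\C$ is false; I want to exhibit, for some $N\gg 1$, a Keller map $F\in\Z[X]^N$ of the linear cubic form over $\Z$ for which the equation $F_1^2+\dots+F_{N-1}^2=0$ has no non-zero integer solution, which is precisely what the corollary forbids. The input is Theorem~B: under our hypothesis there exist $n\gg 1$ and a Keller map $G\in\Z[X]^n$, $G_i(X)=X_i+\langle a_i,X\rangle^3$ with $a_i\in\Z^n$ and $G(0)=0$, such that the curve $\mathcal{C}_G$ (that is, $G_1=\dots=G_n$) has no non-zero integer point; equivalently $\{x\in\Z^n:G_1(x)=\dots=G_n(x)\}=\{0\}$. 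The one gap between this and the desired statement is that $\mathcal{C}_G$ is the fibre $G^{-1}(\Delta)$ over the diagonal line $\Delta:=\C\,(1,\dots,1)$, whereas I need a fibre over a coordinate axis, so that its defining equations read $F_1=\dots=F_{N-1}=0$ and, over $\Z$, combine into a single sum of squares.

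First I would normalize $G$ by the stabilization trick (cf. Proposition~\ref{Pro2}(iii)): replace $G$ by $\hat G(X,X_{n+1}):=(G_1(X),\dots,G_n(X),X_{n+1})$. This is a Keller map in $N:=n+1$ variables, still of the linear cubic form over $\Z$, still with $\hat G(0)=0$, whose last component is the coordinate $X_{n+1}$ — so the associated vector $a_{n+1}$ is zero — and with $\mathcal{C}_{\hat G}\cap\Z^N=\{0\}$, since any point of $\mathcal{C}_{\hat G}\cap\Z^N$ forces $X\in\mathcal{C}_G\cap\Z^n=\{0\}$ and then $X_{n+1}=0$. Renaming, I may thus assume henceforth that $G\in\Z[X]^n$ is of the linear cubic form, $G(0)=0$, $G_n(X)=X_n$ (cube-free last component), and $\mathcal{C}_G\cap\Z^n=\{0\}$.

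The key step is to conjugate $G$ by the elementary matrix $S\in SL(n,\Z)$ whose columns are $e_1,\dots,e_{n-1},(1,\dots,1)^T$, i.e. $(SY)_i=Y_i+Y_n$ for $i<n$ and $(SY)_n=Y_n$; note $S$ and $S^{-1}=I-\sum_{i<n}E_{in}$ have integer entries. Put $F:=S^{-1}\circ G\circ S$. Conjugation by a matrix in $SL(n,\Z)$ keeps the Jacobian determinant equal to $1$ and the coefficients in $\Z$, so $F$ is again a Keller map with $F(0)=0$; and, since $(S^{-1}z)_i=z_i-z_n$ for $i<n$ while $G_n(SY)=(SY)_n=Y_n$, one computes, for $i<n$,
\[
F_i(Y)=G_i(SY)-G_n(SY)=\bigl(Y_i+Y_n+\langle a_i,SY\rangle^3\bigr)-Y_n=Y_i+\langle S^{T}a_i,Y\rangle^3,
\]
together with $F_n(Y)=Y_n$. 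Hence $F$ is of the linear cubic form over $\Z$, with $a_i':=S^{T}a_i\in\Z^n$ for $i<n$ and $a_n':=0$. This is the only step requiring an idea: the subtraction $G_i(SY)-G_n(SY)$ leaves a single cube, rather than a difference of two cubes, precisely because $G_n$ was arranged to be cube-free — which is exactly the purpose of the preliminary normalization, and the reason a naive linear change of coordinates (which would straighten $\Delta$ but scramble each cube into a sum of cubes) is not enough.

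Finally, since $S$ carries $e_n$ to $(1,\dots,1)^T$, conjugation by $S$ carries $\Delta$ to the $n$-th coordinate axis, so
\[
\{Y:F_1(Y)=\dots=F_{n-1}(Y)=0\}=F^{-1}(\C\,e_n)=S^{-1}\bigl(G^{-1}(\Delta)\bigr)=S^{-1}(\mathcal{C}_G),
\]
and intersecting with $\Z^n$ and using $S\in SL(n,\Z)$ yields $S^{-1}(\mathcal{C}_G\cap\Z^n)=\{0\}$. Since a sum of squares of integers vanishes only when every term does, $F_1(Y)^2+\dots+F_{n-1}(Y)^2=0$ then has no non-zero integer solution, so $F$ is the required map. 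In short, the corollary should be a formal consequence of Theorem~B together with this coordinate change; the only genuine obstacle is the one flagged above, namely straightening the diagonal fibre of a linear-cubic Keller map into a coordinate-axis fibre without leaving that class, which the conjugation by $S$ accomplishes once the last component of $G$ has been made cube-free.
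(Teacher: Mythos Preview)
Your proof is correct and is essentially the paper's own argument, just factored more transparently: you present the construction as ``stabilize by appending $X_{n+1}$'' followed by ``conjugate by the elementary matrix $S\in SL(n,\Z)$ sending $e_n$ to $(1,\dots,1)^T$'', whereas the paper writes down the composite map $G_i(X,X_{n+1}):=F_i(X_1+X_{n+1},\dots,X_n+X_{n+1})-X_{n+1}$, $G_{n+1}:=X_{n+1}$ in a single step. Unwinding your two steps gives exactly this formula, and the verification that the resulting map stays in the linear cubic class and that the axis fibre has no non-zero integer points is the same in both.
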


 \begin{proof} Assume the Jacobian conjecture over $\C$ is false. It is sufficient to show that for some $n\gg 1$ there is a Keller map $F\in \Z[X]^n$ of the quadratic-cubic linear form over $\Z$ such that the equation system $F_1=\dots=F_{n-1}=0$ has no non-zero points in $\Z^n$.

First, by the assumption and Theorem B for some $m\gg 1$ we may find a Keller map $G\in \Z[X]^m$ of the quadratic-cubic linear form over $\Z$, 
$$G_i(X)=X_i+\langle a_i,X\rangle^{d_i},\; a_i\in \Z^m,\; d_i=2;3,\;i=\overline{1,m}$$, such that the affine curve $\CC_G$  has no non-zero points in $\Z^n$. Introduce a new variable $T$ and define a new Keller map ,
$$F(X,T):=\hat G\circ R(X,T))\in \Z[X,T]^{m+1}$$, where $\hat G(X,T):= (G_1(X)-T,\dots, G_n(X)-T,T)$ and $R$ is the linear transformation $(X,T)\mapsto (X+Te,T)$, $e:=(1,\dots,1)\in \Z^m$. We have
$$\begin{aligned}&F_i(X,T)&=& G_i(X+Te)-T=X_i+\langle a_i,X-Te\rangle^{d_i},\;  i=\overline{1,m}\cr 
&F_{m+1}(X,T)&=&T\end{aligned}.\eqno(5)$$
In particular, $F$ is of the quadratic-cubic linear form over $\Z$.
 
Now, assume that $(x,t)\in \Z^{m+1}$ is a solution of the equation system $F_1=\dots=F_m=0$.
From (5) it follows that $G_1(x-te)=\dots=G_m(x-te)=t$ that means $x-te\in \CC_G$. Since $\CC_G\cap \Z^m=\{0\}$ and $G_i(0)=0$, we get $(x,t)=0$. Thus, the equation system $F_1=\dots=F_m=0$ has no non-zero solution in $\Z^{m+1}$. 
\end{proof}

\noindent{\it Acknowledgement}. The author wishes to express his thank to the professors Arno  van den Essen,  Ha Huy Vui and Ludwick. M. Dru\.zkowski  for some valuable discussions. The author would like to thank  the Vietnam Institute for Advanced Study in Mathematics for their helps.

\end{document}